\newtheorem{thm}{Theorem}
\newtheorem{prop}[thm]{Proposition}
\newtheorem{cor}[thm]{Corollary}
\newtheorem{rem}[thm]{Remark}
\theoremstyle{definition}
\newcommand{\sldos}{SL(2,\mathbb{C})}
\DeclareMathOperator{\tr}{tr}
\DeclareMathOperator{\Id}{Id}
\title{The $SU(2)$-character varieties of torus knots}
\author{Javier Mart\'inez}
\address{Facultad de
Matem\'aticas, Universidad Complutense de Madrid, Plaza de Ciencias
3, 28040 Madrid, Spain}
\email{javiermartinez@mat.ucm.es}
\author[V. Mu\~{n}oz]{Vicente Mu\~{n}oz}
\address{Facultad de
Matem\'aticas, Universidad Complutense de Madrid, Plaza de Ciencias
3, 28040 Madrid, Spain}
\email{vicente.munoz@mat.ucm.es}
\subjclass[2010]{14D20, 57M25, 57M27.}
\keywords{torus knot, character variety, representations.}
\thanks{Partially supported through Spanish MICINN grant MTM2010-17389.}
\begin{document}

\maketitle

\begin{abstract}
Let $G$ be the fundamental group of the complement of the torus knot of type $(m,n)$. We study the relationship between $SU(2)$ and $\sldos$-representations of this group, looking at their characters. Using the description of the character variety of $G$, $X(G)$, we give a geometric description of $Y(G)\subset X(G)$, the set of characters arising from $SU(2)$-representations.
\end{abstract}

\section{Preliminaries and notation}

Given a finitely presented group $G=\langle x_1 \ldots x_k \vert r_1,...,r_s \rangle $, a $SU(2)$-representation is a homomorphism $\rho : G \rightarrow SU(2)$.
Every representation is completely determined by the image of the generators, the $k$-tuple $(A_1,...,A_k)$ satisfying the relations $r_j(A_1,...,A_k)=\Id$. It can be shown that the space of all representations, $R_{SU(2)}(G) = \text{Hom}(G,SU(2))$ is an affine algebraic set.

It is natural to declare a certain equivalence relation between these representations: we say that $\rho $ and $\rho ^{\prime}$ are equivalent if there exists $P \in SU(2)$ such that $\rho ^{\prime} (g)=P^{-1} \rho(g) P$ for all $g\in G$.

We want to consider the moduli space of $SU(2)$-representations, the GIT quotient:
$$
\mathcal{M}_{SU(2)} = \text{Hom}(G,SU(2)) // SU(2).
$$
There are also analogous definitions for $\sldos$: we can consider $\sldos$-represen\-tations of $G$, which form a set $R_{\sldos}(G)$, consider $\sldos$-equivalence and construct the associated moduli space:
$$
\mathcal{M}_{\sldos} = \text{Hom}(G,\sldos) // \sldos.
$$
The natural inclusion $SU(2) \hookrightarrow \sldos$ shows that we can regard every $SU(2)$- representation as a $\sldos$-representation.
Moreover, if two representations are $SU(2)$-equivalent, then they are also $SL(2,\mathbb{C})$-equivalent. This leads to a map between moduli spaces:
$$
\mathcal{M}_{SU(2)} \overset{i_{\ast}}{\longrightarrow} \mathcal{M}_{SL(2,\mathbb{C})}
$$
To every representation $\rho \in R_{\sldos}(G)$ we can associate its character $\chi_{\rho}$, defined as the map $\chi_{\rho} : G \rightarrow \mathbb{C}$, $\chi_{\rho}(g) = \tr(\rho(g))$.
This defines a map $\chi : R_{\sldos}(G) \rightarrow \mathbb{C}^{G}$, where equivalent representations have the same character. Its image $X_{\sldos}(G)=\chi(R_{\sldos}(G))$ is called the character variety of $G$.

There is an important relation between the $\sldos$-character variety of $G$ and the moduli space $\mathcal{M}_{\sldos}$. It is seen in \cite{cusha} that:
\begin{itemize}
 \item $X_{\sldos}(G)$ can be endowed with the structure of algebraic variety.
 \item The natural associated map that takes every representation to its character, $\mathcal{M}_{\sldos}(G) \longrightarrow X_{\sldos}(G)$, is bijective.
 We specify the nature of this correspondence for the case of $SU(2)$-representations in the next section.
\end{itemize}
We emphasize that $X_{\sldos}(G)$, as a set, consists of characters of $\sldos$-represen\-tations. We can also take the set of characters of $SU(2)$-representations, and again we will have a map $X_{SU(2)}(G) \overset{i^{\ast}}{\longrightarrow} X_{\sldos}(G)$.

We focus on the case when $G$ is a torus knot group. Consider the torus of revolution $T^{2}\subset S^{3}$. If we identify it with $\mathbb{R}^{2}/ \mathbb{Z}^{2}$, the image of the line
$y=\frac{m}{n}x$ defines the torus knot of type $(m,n)$, $K_{m,n} \subset S^{3}$ for coprime $m,n$. An important invariant of a knot is the fundamental group of its complement in $S^{3}$,
$G_{m,n}=\pi_1(S^{3}-K_{m,n})$. These groups admit the following presentation:
$$
G_{m,n}=\langle x,y \mid x^{m}=y^{n} \rangle
$$

The $SL(2,\mathbb{C})$-character variety of these groups for the case $(m,2)$ was treated in \cite{oll}. A complete description for $(m,n)$ coprime was given in \cite{vic}, and the general case $(m,n)$ was studied using combinatorial tools in \cite{mmoll}. $SU(2)$-character varieties for knot groups were studied in \cite{kla}. For the case $(m,2)$, the relation between both character varieties has been recently treated in \cite{oll2}.

\section{$SU(2)$-character varieties}
We recall that $SU(2) \cong S^{3}$, the isomorphism being given by:
\begin{eqnarray*}
 S^{3} \subset \mathbb{C}^{2}  & \longrightarrow & SU(2) \\ (a,b) & \longrightarrow & \begin{pmatrix}
                                                                                      a & -\bar{b} \\ b & \bar{a}
                                                                                     \end{pmatrix}
\end{eqnarray*}
The correspondence is a ring homomorphism if we look at $S^3$ as the set of unit quaternions.
First of all, we want to point out the following fact, which was already true for $\sldos$:
\begin{prop}
The correspondence:
\begin{eqnarray*}
\mathcal{M}_{SU(2)}(G) & \longrightarrow & X_{SU(2)}(G) \\ \rho & \longrightarrow & \chi_{\rho}
\end{eqnarray*}
that takes a representation to its character is bijective.
\end{prop}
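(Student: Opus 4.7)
The plan is to reduce the statement to proving injectivity: the map is well-defined (conjugate representations share their character) and is tautologically surjective from the definition $X_{SU(2)}(G) = \chi(R_{SU(2)}(G))$. Since $SU(2)$ is compact, every orbit of the conjugation action on $R_{SU(2)}(G)$ is closed, so the GIT quotient agrees with the topological quotient and two representations define the same point of $\mathcal{M}_{SU(2)}(G)$ precisely when they are $SU(2)$-conjugate. The content therefore reduces to showing: if $\chi_\rho = \chi_{\rho'}$ for $\rho, \rho' : G \to SU(2)$, then there exists $P \in SU(2)$ with $\rho' = P^{-1}\rho P$. I would split into the reducible and irreducible cases, observing that reducibility of an $\sldos$-representation is detected by the character (for example via the vanishing of $\chi_\rho(ghg^{-1}h^{-1}) - 2$ for all pairs $g,h$), so $\rho$ and $\rho'$ are simultaneously reducible or simultaneously irreducible.

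In the irreducible case, the $\sldos$-bijection recalled in the previous section already produces some $P \in \sldos$ with $\rho' = P^{-1}\rho P$, and the task is to replace it by an element of $SU(2)$. I would take adjoints in this relation: since $\rho(g)^* = \rho(g)^{-1}$ and $\rho'(g)^* = \rho'(g)^{-1}$ for every $g$, a short manipulation shows that $PP^*$ commutes with $\rho(g)$ for all $g \in G$. Schur's lemma applied to the irreducible representation $\rho$ then forces $PP^* = \lambda\, \Id$ for some $\lambda \in \mathbb{C}$; taking determinants and using $\det P = 1$ gives $\lambda = 1$, hence $P P^* = \Id$ and therefore $P \in SU(2)$. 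This upgrading step is the main obstacle and the only part I would write out carefully.

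In the reducible case, the inclusion $\rho(G) \subset U(2)$ implies that if $\rho$ preserves a line $L \subset \mathbb{C}^2$ it also preserves $L^\perp$, so one can $SU(2)$-conjugate $\rho$ to a diagonal representation $\operatorname{diag}(\lambda, \lambda^{-1})$ with $\lambda : G \to U(1)$ a homomorphism; similarly $\rho'$ becomes $\operatorname{diag}(\mu, \mu^{-1})$. Equal characters give $\mu(g) \in \{\lambda(g), \lambda(g)^{-1}\}$ pointwise, and the subsets $G_\pm = \{g \in G : \mu(g) = \lambda(g)^{\pm 1}\}$ are subgroups of $G$ whose union is $G$. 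Since a group cannot be the union of two proper subgroups, either $\mu = \lambda$ or $\mu = \lambda^{-1}$, and conjugation by $\begin{pmatrix} 0 & -1 \\ 1 & 0 \end{pmatrix} \in SU(2)$ swaps the two diagonal forms, completing the argument.
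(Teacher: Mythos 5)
Your proof is correct, but it takes a genuinely different route from the paper's in the irreducible case, and a cleaner variant in the reducible one. The paper works entirely by hand: for irreducible $\rho,\rho'$ it normalizes $\rho(h)=\rho'(h)$ to a common diagonal matrix, puts $\rho(g),\rho'(g)$ in a standard unitary form, and extracts $\rho(\alpha)=\rho'(\alpha)$ for every $\alpha$ from the trace equations for $\alpha$, $h\alpha$ and $g\alpha$; for reducible ones it rules out the mixed case $\lambda(g_1)=\mu(g_1)$, $\lambda(g_2)=\mu(g_2)^{-1}$ by computing $\tr\rho(g_1g_2)$. You instead import the $\sldos$ statement as a black box and unitarize the intertwiner: showing $PP^{\ast}$ commutes with the image and invoking Schur's lemma is the standard argument that an equivalence of irreducible unitary representations can be chosen unitary, and it is shorter and more conceptual than the paper's coordinate computation. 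Your reducible case replaces the paper's trace manipulation by the observation that $G_{+}$ and $G_{-}$ are subgroups covering $G$, so one of them is all of $G$ --- same content, tidier packaging. Two small points to tighten. First, $\det P=1$ only gives $\lambda^{2}=\det(PP^{\ast})=1$; to exclude $\lambda=-1$ you should note that $PP^{\ast}$ is positive definite (or that $\tr(PP^{\ast})>0$), whence $\lambda=1$. Second, the bijectivity of $\mathcal{M}_{\sldos}(G)\to X_{\sldos}(G)$ a priori only says the two representations have intersecting orbit closures; you should add the (standard) remark that orbits of irreducible $\sldos$-representations are closed, so equal characters really do give an element $P\in\sldos$ conjugating one to the other. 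With those remarks your argument is complete, and arguably buys more than the paper's: the unitarization step works verbatim for $SU(n)\subset SL(n,\mathbb{C})$, whereas the paper's explicit $2\times 2$ computations do not generalize as readily.
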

\begin{proof}
We follow the steps taken in \cite{cusha}, this time for $SU(2)$. First of all, every matrix $A$ in $SU(2)$ is normal, hence diagonalizable. Since $\det(A)=1$, the eigenvalues of $A$ are $\{ \lambda, \lambda^{-1} \}$ for some $\lambda\in \mathbb{C}^{\ast}$. In particular, $\tr(A)$ completely determines the set of eigenvalues $\{ \lambda, \lambda^{-1} \}$.

Now, if $\rho$ is a reducible $SU(2)$-representation, there is a common eigenvector $e_1$ for all $\rho(g)$ and therefore they are all diagonal with respect to the same basis. If $\rho'$ is a second reducible representation such that $\chi_{\rho}(g)=\chi_{\rho'}(g)$ for all $g\in G$, this means that they share the same eigenvalues for every $g\in G$. After choosing another basis for $\rho'$ such that $\rho'(g)$ is diagonal for all $g\in G$:
$$
\rho(g)= \begin{pmatrix} \lambda(g) & 0 \\ 0 & \lambda^{-1}(g) \end{pmatrix} \quad \rho'(g)= \begin{pmatrix} \mu(g) & 0 \\ 0 & \mu^{-1}(g) \end{pmatrix}
$$
where either $\lambda(g)=\mu(g)$ or $\lambda(g)=\mu^{-1}(g)$ for every $g\in G$. Interchanging the roles of $\lambda$ and $\lambda^{-1}$ if necessary, there is always $g_1\in G$ such that $\lambda(g_1)=\mu(g_1)$, so there is $g_1\in G$ such that $\rho(g_1)=\rho'(g_1)$. We also notice that if $\rho(g)=\pm \Id$, then $\rho'(g)=\rho(g)= \pm \Id$.

We claim that $\rho(g_2)=\rho'(g_2)$ for all $g_2\in G$. If not, there exists $g_2\in G$ such that $\rho(g_2)=\rho'(g_2)^{-1}\neq \pm \Id$. So $\lambda(g_1)=\mu(g_1)$ and $\lambda(g_2)=\mu^{-1}(g_2)$. On the other hand, we know that $\tr(\rho'(g_1g_2))=\tr(\rho(g_1g_2))$, so:
\begin{eqnarray*}
\mu(g_1)\mu(g_2)+\mu^{-1}(g_1)\mu^{-1}(g_2) & = & \lambda(g_1)\lambda(g_2)+\lambda^{-1}(g_1)\lambda^{-1}(g_2) \\ & = & \mu(g_1)\mu^{-1}(g_2) + \mu^{-1}(g_1) \mu(g_2)
\end{eqnarray*}
Rearranging the terms:
$$
\mu(g_2)(\mu(g_1)-\mu^{-1}(g_1))=\mu^{-1}(g_2)(\mu(g_1)-\mu^{-1}(g_1))
$$
which implies that $\mu(g_2)=\pm 1$, so that $\rho(g_2)=\pm \Id$, a contradiction. Therefore $\lambda(g)=\mu(g)$ for all $g\in G$. Hence there exists $P\in SU(2)$ such that $\rho(g)=P^{-1}\rho(g)P$ for all $g\in G$, i.e, the representations are equivalent.

For the irreducible case, we point out the following fact: if $\rho$ is a irreducible $SU(2)$-representation and $\rho(g) \neq \pm \Id$ for a given $g\in G$, then there exists $h\in G$ such that $\rho$ restricted to the subgroup $H=\langle g,h \rangle$ is again irreducible. To see it, since $\rho(g)\neq \pm \Id$, $\rho(g)$ has two eigenspaces $L_1,L_2$ associated to the pair of different eigenvalues $\mu_1,\mu_2$. Since the representation is irreducible, there are elements $h_i$ such that $L_i$ is not invariant under $\rho(h_i)$. We can take $h=h_1$ or $h=h_2$ unless $L_1$ is invariant under $\rho(h_2)$, or $L_2$ is invariant under $\rho(h_1)$, in this case we can choose $h=h_1h_2$.

For a group generated by two elements, $H= \langle g,h \rangle$, the reducibility of a representation is completely determined by $\chi_{\rho}([g,h])$. It can be seen in the following chain of equivalences:
\begin{eqnarray*}
\rho \vert_{H} \text{ is reducible } & \Leftrightarrow & \rho(g),\rho(h) \text{ share a common eigenvector}\\
& \Leftrightarrow & \rho(g),\rho(h) \text{ are simultaneously diagonalizable } \\
& \Leftrightarrow & [\rho(g),\rho(h)]=\Id \\
& \Leftrightarrow & \tr[\rho(g),\rho(h)]=2 \\
& \Leftrightarrow & \chi_{\rho}([g,h])=2
\end{eqnarray*}

Let $\rho, \rho'$ be two $SU(2)$-representations such  that $\chi_{\rho}=\chi_{\rho'}$. By the previous observation, there are $g,h\in G$ such that $\rho\vert_{\langle g,h \rangle}$ is irreducible, i.e, $\chi_{\rho}([g,h])\neq 2 $. It follows that, since $\chi_{\rho}=\chi_{\rho'}$, $\chi_{\rho'}([g,h])\neq 2$, so $\rho'\vert_{\langle g,h \rangle}$ is irreducible too. Varying $\rho,\rho'$ in their equivalence classes, we can assume that there are basis $B,B'$ such that:
$$
\rho(h)=\rho'(h)=\begin{pmatrix} \lambda & 0 \\ 0 & \lambda^{-1} \end{pmatrix}
$$
The matrices $\rho(g),\rho'(g)$ will not be triangular, by irreducibility, and conjugating again by diagonal unitary matrices, we can assume that:
$$
\rho(g)=\begin{pmatrix} a & -b \\ b & \bar{a} \end{pmatrix}, \qquad \rho'(g)= \begin{pmatrix} a' & -b' \\ b' & \bar{a}'  \end{pmatrix}
$$
for $a,a' \in \mathbb{C},\; b,b'\in \mathbb{R}^{+}$. Notice that $b,b'\neq 0$ since $\rho\vert_{\langle g,h \rangle}$ is irreducible.
More in general, for any $\alpha \in G$:
$$
\rho(\alpha) = \begin{pmatrix} x & -\bar{y} \\y & \bar{x} \end{pmatrix}, \qquad \rho'(\alpha)= \begin{pmatrix} x' & -\bar{y}' \\ y' & \bar{x}' \end{pmatrix}
$$
Now, the equations $\chi_{\rho}(\alpha) = \chi_{\rho'}(\alpha), \: \chi_{\rho}(h\alpha)=\chi_{\rho'}(h\alpha)$ imply that:
\begin{eqnarray*}
x+\bar{x} & = & x'+\bar{x}' \\ \lambda x + \lambda^{-1} \bar{x} & = &  \lambda x' + \lambda^{-1} \bar{x}'
\end{eqnarray*}
and since $\lambda \neq \pm 1$, we get that $x=x'$.

Substituting $\alpha=g$, we get that $a=a'$ and since $\det(\rho(g))=\det(\rho'(g))=1$, $b=b'$, so $\rho(g)=\rho'(g)$.

Substituting again $\alpha$ for $g\alpha$, we arrive at the equation $ax-by=ax-by'$, which implies that $y=y'$ and finally that $\rho(\alpha)=\rho'(\alpha)$: we have proved that the representations $\rho$ and $\rho'$, after $SU(2)$-conjugation, are the same, i.e, they are equivalent.
\end{proof}
\begin{cor}
We have a commutative diagram:
$$
\xymatrix{\mathcal{M}_{SU(2)}(G) \ar[r]^{1:1} \ar[d]^{i_{\ast}} & X_{SU(2)}(G)\ar[d]^{i_{\ast}} \\ \mathcal{M}_{\sldos}(G) \ar[r]^{1:1} & X_{\sldos}(G)}
$$
\end{cor}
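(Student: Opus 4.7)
The plan is straightforward: the two ``$1{:}1$'' labels in the square are already established, and the square's commutativity will follow from a direct diagram chase. Specifically, the top horizontal bijection is exactly the content of the Proposition just proved, and the bottom horizontal bijection is the Culler--Shalen statement quoted from \cite{cusha} in Section~1. The two vertical arrows $i_\ast$ (between moduli spaces) and $i^\ast$ (between character varieties) were both introduced in Section~1: the first is induced by the inclusion $SU(2)\hookrightarrow \sldos$, and sends an $SU(2)$-conjugacy class to an $\sldos$-conjugacy class because conjugation by $P\in SU(2)$ is a particular case of conjugation by $P\in \sldos$; the second sends a character $\chi_\rho$ to the same function $g\mapsto\tr\rho(g)$, now viewed as a character of $\rho$ regarded as an $\sldos$-representation. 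In both cases well-definedness is immediate.

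To verify commutativity, I will chase an equivalence class $[\rho]\in \mathcal{M}_{SU(2)}(G)$ both ways around the square. Along the top-then-right route, I first obtain $\chi_\rho\in X_{SU(2)}(G)$ and then the same function $g \mapsto \tr \rho(g)$ viewed inside $X_{\sldos}(G)$. Along the left-then-bottom route, I first obtain the $\sldos$-equivalence class of $\rho$ and then its character, which is again $g \mapsto \tr \rho(g)$. The two outcomes coincide as functions $G\to\mathbb{C}$, so the square commutes.

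I do not foresee any genuine obstacle: the corollary is essentially a repackaging of the Proposition together with the cited Culler--Shalen bijection, and every verification reduces to unraveling the definitions of $i_\ast$ and $i^\ast$. The only point worth spelling out carefully is that the two trace functions that must be compared really are the same function on $G$, which is tautological once the inclusion $SU(2)\subset \sldos$ is taken into account.
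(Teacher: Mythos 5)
Your proposal is correct and matches the paper's (implicit) reasoning: the paper offers no written proof for this corollary, treating it as an immediate consequence of the preceding Proposition, the cited Culler--Shalen bijection, and the tautological fact that both routes around the square send $[\rho]$ to the function $g\mapsto\tr\rho(g)$. Your diagram chase simply makes that explicit.
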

The previous corollary shows that we can equivalently study the relationship between $SU(2)$ and $\sldos$-representations of $G$ from the point of view of their characters or from the point of view of their representations. Looking at the diagram, we also deduce that:
\begin{cor}
\label{inclusionmod}
The natural inclusion $i_{\ast}: \mathcal{M}_{SU(2)}(G) \longrightarrow \mathcal{M}_{SL(2,\mathbb{C})}(G)$ is injective.
\end{cor}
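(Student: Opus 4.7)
The plan is to do a short diagram chase using the square in the corollary statement, together with the trivial observation that the right-hand vertical map $i_*: X_{SU(2)}(G) \to X_{SL(2,\mathbb{C})}(G)$ is injective. Indeed, both character sets consist of complex-valued functions on $G$ (namely $g \mapsto \tr \rho(g)$), and the map $i_*$ on characters is nothing but the inclusion of subsets $X_{SU(2)}(G) \subseteq X_{SL(2,\mathbb{C})}(G) \subseteq \mathbb{C}^G$; there is no quotient being taken here, so injectivity is immediate.

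Given this, the argument proceeds by diagram chase. Suppose $[\rho], [\rho'] \in \mathcal{M}_{SU(2)}(G)$ satisfy $i_*[\rho] = i_*[\rho']$. Going down then right in the commutative square, their images in $X_{SL(2,\mathbb{C})}(G)$ agree, by bijectivity of the lower horizontal arrow (the Culler--Shalen result recalled above). Going right then down through the same square, this means that the $SU(2)$-characters $\chi_\rho$ and $\chi_{\rho'}$ have the same image under the inclusion $X_{SU(2)}(G) \hookrightarrow X_{SL(2,\mathbb{C})}(G)$; since this inclusion is injective, $\chi_\rho = \chi_{\rho'}$ already in $X_{SU(2)}(G)$. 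Finally, by the bijectivity of the top horizontal arrow (the proposition just proved) we conclude $[\rho] = [\rho']$ in $\mathcal{M}_{SU(2)}(G)$.

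There is no real obstacle: the statement is a formal consequence of the proposition combined with the fact that the trace function does not care which group ($SU(2)$ or $SL(2,\mathbb{C})$) one views a representation as living in. If one wants to avoid the diagram altogether, one can also rephrase the argument directly: two $SU(2)$-representations that are $SL(2,\mathbb{C})$-conjugate are in particular assigned the same trace on every group element, so they have the same $SU(2)$-character, and then the preceding proposition forces them to be $SU(2)$-conjugate. Either way the proof is only a couple of lines.
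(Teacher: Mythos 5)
Your proof is correct and is essentially the paper's own argument: the corollary is stated there as an immediate consequence of the commutative square (``Looking at the diagram, we also deduce that\dots''), using the two horizontal bijections and the fact that $X_{SU(2)}(G)\hookrightarrow X_{SL(2,\mathbb{C})}(G)$ is a genuine inclusion of subsets of $\mathbb{C}^G$. You have merely written out the diagram chase that the paper leaves implicit.
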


\section{$SU(2)$-character varieties of torus knots}

We focus now on the specific case of the torus knot $G_{m,n}$ of coprime type $(m,n)$. Henceforth, we will often denote $X_{\sldos}=X_{\sldos}(G)$ and omit the group in our notation.
In this case:
$$
R_{\sldos}(G)= \{ (A,B) \in SL(2,\mathbb{C}) \mid A^m=B^n \}
$$
and:
$$
R_{SU(2)}(G) = \{ (A,B) \in SU(2) \mid A^m=B^n \}
$$
We have a decomposition of $X_{\sldos}$:
$$
X_{\sldos} = X_{red} \cup X_{irr}
$$
where $X_{red}$ is the subset of characters of reducible representations and $X_{irr}$ is the subset of  characters of irreducible representations. Inside $X_{\sldos}$ we have $i_{\ast}(X_{SU(2)})$, i.e, the set of characters of $SU(2)$-representations. For simplicity, we will denote $Y= i_{\ast}(X_{SU(2)})$. Again, $Y$ decomposes in $Y_{red} \cup Y_{irr}$.

\subsection*{Reducible representations}

\begin{prop}
\label{redulemma}
There is an isomorphism $Y_{red}\cong [-2,2] \subset \mathbb{R}$
\end{prop}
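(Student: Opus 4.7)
The plan is to identify every reducible $SU(2)$-representation of $G_{m,n}$ up to conjugation, parametrize the space of such representations by a single angle, and then read off the character from a carefully chosen group element whose class generates the abelianization.

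First, I would set up the normal form. Any reducible $SU(2)$-representation has a common invariant line $L$, and since $\rho(x),\rho(y)$ are unitary, $L^\perp$ is also invariant; so $\rho(x)$ and $\rho(y)$ can be simultaneously diagonalized by an element of $SU(2)$. After conjugation I may assume
\[
\rho(x)=\begin{pmatrix}e^{i\alpha}&0\\0&e^{-i\alpha}\end{pmatrix},\qquad \rho(y)=\begin{pmatrix}e^{i\beta}&0\\0&e^{-i\beta}\end{pmatrix},
\]
and the defining relation $x^m=y^n$ becomes $m\alpha-n\beta\in 2\pi\mathbb{Z}$. In particular every reducible representation is abelian.

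Second, I would use the hypothesis $\gcd(m,n)=1$ to parametrize the solution set. Picking Bezout integers $c,d$ with $cn+dm=1$, the map $t\mapsto(\alpha,\beta)=(nt,mt)$ from $S^1=\mathbb{R}/2\pi\mathbb{Z}$ is surjective onto the admissible $(\alpha,\beta)\in(\mathbb{R}/2\pi\mathbb{Z})^2$; indeed, given such a solution $(\alpha_0,\beta_0)$, the value $t=c\alpha_0+d\beta_0$ recovers it, as one checks by reducing $nt$ and $mt$ modulo $2\pi$ using the constraint $m\alpha_0-n\beta_0\in 2\pi\mathbb{Z}$. So the space of diagonal solutions is a circle. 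The only residual $SU(2)$-conjugation preserving the diagonal form, by $\bigl(\begin{smallmatrix}0&-1\\1&0\end{smallmatrix}\bigr)\in SU(2)$, swaps the diagonal entries and sends $t$ to $-t$. Hence the moduli of reducible representations is $S^1/\{\pm 1\}\cong[0,\pi]$.

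Finally, to pass to characters and land in $[-2,2]$, I would introduce the element $s=x^c y^d$. In the abelianization $G_{m,n}^{ab}\cong\mathbb{Z}$ (where $x\mapsto n$ and $y\mapsto m$), the class $[s]$ maps to $cn+dm=1$, so $[s]$ is a generator. Under any diagonal representation as above, $\rho(s)=\mathrm{diag}(e^{it},e^{-it})$, so $\chi_\rho(s)=2\cos t$; as $t$ ranges over $[0,\pi]$ this traces out $[-2,2]$ bijectively. Moreover, any word $w$ in $x,y$ gives $\chi_\rho(w)=2\cos(kt)$ with $k$ the integer image of $w$ in $G_{m,n}^{ab}$, so the full character is in fact determined by $t$ modulo sign, i.e.\ by $\chi_\rho(s)$. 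Continuity and compactness of $[0,\pi]$ then upgrade the bijection $Y_{red}\to[-2,2]$ to a homeomorphism.

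I do not expect a serious obstacle here; the only substantive point is the Bezout surjectivity in the second step, which is where $\gcd(m,n)=1$ is used crucially, both to parametrize the solution circle and to produce a group element whose image in the abelianization is a generator.
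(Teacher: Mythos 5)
Your argument is correct and takes essentially the same route as the paper: simultaneous diagonalization of a reducible $SU(2)$-pair, the coprimality-based parametrization $\lambda=t^{n}$, $\mu=t^{m}$ by a unique $t\in S^{1}$ (your Bezout step is exactly this point), the quotient by the Weyl swap $t\sim t^{-1}$, and the coordinate $t+t^{-1}=2\cos t\in[-2,2]$. The only difference is that you derive in full what the paper imports from \cite{vic}, and your observation that $\chi_{\rho}(x^{c}y^{d})=t+t^{-1}$ realizes the paper's coordinate $s$ as an explicit trace function.
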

\begin{proof}
 We will use, from now on, the explicit description of $X_{\sldos}$ given in \cite{vic}.
There is an isomorphism $X_{red} \cong \mathbb{C}$ given by:
$$
\left( A= \begin{pmatrix}
           t^n & 0 \\ 0 & t^{-n}
          \end{pmatrix},
B = \begin{pmatrix}
     t^m & 0 \\ 0 & t^{-m}
    \end{pmatrix}
\longrightarrow s=t+t^{-1} \in \mathbb{C}
\right)
$$
This is because given a reducible $\sldos$-representation $\rho$, we can consider the associated split representation $\rho=\rho' + \rho''$, for which in
a certain basis takes the form:
$$
A = \begin{pmatrix} \lambda & 0 \\ 0 & \lambda^{-1} \end{pmatrix}, B= \begin{pmatrix} \mu & 0 \\ 0 & \mu^{-1} \end{pmatrix}
$$
and the equality $A^{m}=B^{n}$ implies that $\lambda=t^{n}, \mu=t^{m}$ for a unique $t\in \mathbb{C}$ (here we use that $m,n$ are coprime).
Now, since $A,B \in SU(2)$, $t$	 must satisfy that $\vert t \vert^{2}=1$, i.e, $t \in S^{1} \subset \mathbb{C}$. We have to also take account of the
change of order of the basis elements and therefore $t \sim \frac{1}{t}$. So the parameter space is isomorphic to $[-2,2]$ (under the correspondence $t \in S^{1} \longrightarrow s=t+t^{-1}=2\operatorname{Re}(t)\in [-2,2]$).
\end{proof}
To explicitly describe when a pair $(A,B)$ is reducible, we follow \cite[2.2]{vic}.
First of all, $A$ and $B$ are diagonalizable (recall that $A,B\in SU(2)$), so we can rule out the Jordan type case since it is not possible. So:
\begin{prop}
\label{reducases}
In any of the cases:
\begin{itemize}
\item $A^{m}=B^{n}\neq \pm \Id$
\item $A=\pm \Id$ or $B=\pm \Id$
\end{itemize}
the pair $(A,B)$ is reducible.
\end{prop}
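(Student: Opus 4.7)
The plan is to show in each case that $A$ and $B$ share a common eigenvector, since we have already established (in the proof of the previous proposition) that for matrices in $SU(2)$, a pair is reducible if and only if it is simultaneously diagonalizable, which happens exactly when the two matrices share a common eigenvector.

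For the first case, let $C = A^m = B^n$. Since $C \in SU(2)$ and $C \neq \pm \Id$, $C$ is diagonalizable with two \emph{distinct} eigenvalues $\lambda, \lambda^{-1}$ (with $\lambda \neq \pm 1$), so each of its eigenspaces $L_1, L_2$ is one-dimensional. Now $A$ commutes with $C = A^m$, and $B$ commutes with $C = B^n$, so both $A$ and $B$ preserve the eigenspaces $L_1, L_2$ of $C$. Because these eigenspaces are one-dimensional, $A$ and $B$ must act on each $L_i$ as scalar multiplication, which means every vector in $L_1 \cup L_2$ is a common eigenvector for $A$ and $B$. Hence $(A,B)$ is reducible.

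For the second case, if $A = \pm \Id$ then $A$ acts as a scalar on all of $\mathbb{C}^2$, so any eigenvector of $B$ (which exists since $B \in SU(2)$ is diagonalizable) is automatically an eigenvector of $A$. The case $B = \pm \Id$ is symmetric.

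I do not expect any real obstacle here; the only subtle point is the first case, where one must observe that the equality $A^m = B^n$ forces $A$ and $B$ to commute with a common non-central element, and that having two distinct eigenvalues for this element forces its eigenspaces to be one-dimensional invariant lines for both $A$ and $B$.
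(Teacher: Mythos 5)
Your proof is correct and follows essentially the same route as the paper: the paper diagonalizes $A$ and observes that $B$, commuting with the non-central matrix $B^{n}=A^{m}$ (which has distinct eigenvalues), must be diagonal in the same basis, while you phrase this symmetrically via the common element $C=A^{m}=B^{n}$. If anything, you make explicit the key commutation step that the paper leaves implicit.
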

\begin{proof}
Let us deal with the first case, when $A^{m}=B^{n} \neq \pm \Id $. $A$ is diagonalizable with respect to a basis $\{e_{1},e_{2}\}$, and takes the form $\begin{pmatrix} \lambda & 0 \\ 0 & \lambda^{-1} \end{pmatrix}$.
Then:
$$
B^{n}=A^{m}= \begin{pmatrix} \lambda^{m} & 0 \\ 0 &\lambda^{-m} \end{pmatrix}
$$
so $B$ is diagonal in the same basis and the pair is reducible.
For the second case, if $A= \alpha \Id$, where $\alpha=\pm 1$, then any basis diagonalizing $B$ diagonalizes $A$, hence the pair is reducible. The case $B=\alpha \Id$ follows in the same way.
\end{proof}

\subsection*{Irreducible representations}

\hspace*{10pt}

Now we look at the irreducible set of representations, since we want to study $Y_{irr}$.
Let $(A,B)\in R_{SU(2)}(G)$ be an irreducible pair. Both are diagonalizable, and using Proposition \ref{reducases}, they must satisfy that $A^{m}=B^{n}=\pm\Id$, $A,B \neq \pm \Id$. The eigenvalues $\lambda, \lambda^{-1} \neq \pm 1$ of $A$ satisfy $\lambda^{m} = \pm 1$, the eigenvalues $\mu, \mu^{-1}$ of $B$ satisfy $\mu^{n}= \pm 1$ and $\lambda^{m}= \mu^{n}$.

We can associate to $A$ a basis $\{ e_{1},e_{2} \}$ under which it diagonalizes, and the same for $B$, obtaining another basis $ \{ f_{1}, f_{2} \}$. The eigenvalues $\lambda,\mu$ and the eigenvectors ${e_i,f_i}$ completely determine the representation $(A,B)$.
We are interested in $i_{\ast}(\mathcal{M}_{SU(2)})$, $SL(2,\mathbb{C})$-equivalence classes of such pairs $(A,B)$, and these are fully described by the projective invariant of the four points $\{ e_1, e_2,f_1,f_2 \}$, the cross ratio:
$$
[e_1,e_2,f_1,f_2] \in \mathbb{P}^{1}- \{0,1,\infty \}
$$
(we may assume that the four eigenvectors are different since the representation is irreducible, see \cite{vic} for details).

Since both $A,B \in SU(2)$, we know that $e_1 \perp e_2 $ and $\Vert e_1 \Vert = \Vert e_2 \Vert = 1$, so shifting the vectors by a suitable rotation $C\in SU(2)$, we can assume that $e_1 = [1:0], e_2 = [0:1]$, and therefore $f_1 = [a:b], f_2 = [-\bar{b} : \bar{a}]$, since they are orthogonal too.
So the pair $(A,B)$ inside $X_{\sldos}$ is determined by $\lambda$, $\mu$ satisfying the conditions above and the projective cross ratio:
$$
r = \Big[ e_1,e_2,f_1,f_2 \Big] = \Big[ 0,\infty, \frac{b}{a}, - \frac{\bar{a}}{\bar{b}}\Big] = \frac{b\bar{b}}{-a\bar{a}} = \frac{b\bar{b}}{b\bar{b}-1} = \frac{t}{t-1}
$$
where we have used that $a\bar{a}+b\bar{b}=1$ and $t=\vert b \vert^{2}, b\in (0,1)$. We also get that $r$ is real and $r \in (-\infty, 0)$.

The converse is also true: if the triple $(\lambda, \mu , r)$, satisfies that $\lambda^{m}= \mu^{n}= \pm 1$, $\lambda,\mu\neq \pm 1$ and $r\in (-\infty, 0)$, then $(A,B) \in i_{\ast}(\mathcal{M}_{SU(2)})$.
To see this, $r$ determines uniquely $t= \vert b \vert^{2}$ since $r(t)$ is invertible for $t \in (0,1)$. Once $ \vert b \vert$ is fixed, we get that $\vert a \vert$ is fixed too, using $\vert a \vert^{2} = 1 - \vert b \vert^{2}$. We can choose any $(a,b) \in S^{1} \times S^{1}$ and we conclude that $(A,B)$ is $SL(2,\mathbb{C})$-equivalent to a $SU(2)$ representation. To be  more precise, it is equivalent to the representation with eigenvalues $\lambda,\mu$ and eigenvectors $[1:0],[0:1],[a:b],[-\bar{b}, \bar{a}]$.

Finally, we have to take account of the $ \mathbb{Z}_{2}\times \mathbb{Z}_{2} $ action given by the permutation of the eigenvalues:
\begin{itemize}
\item Permuting $e_1,e_2$ takes $(\lambda,\mu,r)$ to $(\lambda^{-1},\mu,r^{-1})$
\item Permuting $f_1,f_2$ takes $(\lambda,\mu,r)$ to $(\lambda,\mu^{-1},r^{-1})$
\end{itemize}

Since $\lambda^{m}=\mu^{n}=\pm 1$, we get that:
\begin{equation}\label{eqn:labeleada}
\lambda=e^{\pi ik/m}, \qquad \mu=e^{\pi ik'/n},
\end{equation}
where since $\lambda \sim \lambda^{-1}, \mu \sim \mu^{-1}$ and $\lambda \neq \pm 1, \mu \neq \pm 1$, we can restrict to the case when $0<k<m, 0<k'<n$. We also notice that $\lambda^{m}=\mu^{n}$ implies that $k\equiv k' \pmod{2}$. So the irreducible part is made of $(m-1)(n-1)/2$ intervals.

We have just proved:
\begin{prop}
\label{irrchar}
$$
Y_{irr} \cong \lbrace (\lambda,\mu,r) : \lambda^{m}=\mu^{n}=\pm 1; \lambda,\mu\neq \pm 1; r\in (-\infty,0)\rbrace / \mathbb{Z}_{2}\times \mathbb{Z}_{2}
$$
This real algebraic variety consists of $\frac{(m-1)(n-1)}{2}$ open intervals.
\end{prop}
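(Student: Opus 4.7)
The plan is to identify $Y_{irr}$ with an explicit three-parameter space modulo a $\mathbb{Z}_2\times\mathbb{Z}_2$-action, and then count its connected components from the eigenvalue part.

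First I would characterize the irreducible $SU(2)$-pairs. If $(A,B)\in R_{SU(2)}(G)$ is irreducible, Proposition \ref{reducases} forces $A^m=B^n=\pm\Id$ with $A,B\ne\pm\Id$; diagonalizing gives eigenvalues $\lambda^{\pm 1}$ of $A$ with $\lambda^m=\pm 1$, $\lambda\ne\pm 1$, and $\mu^{\pm 1}$ of $B$ with $\mu^n=\pm 1$, $\mu\ne\pm 1$, subject to $\lambda^m=\mu^n$. Writing $\lambda=e^{\pi ik/m}$, $\mu=e^{\pi ik'/n}$ with $0<k<m$, $0<k'<n$ then imposes the single parity constraint $k\equiv k'\pmod 2$.

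The crux is constructing the cross-ratio invariant. Irreducibility makes the four eigenvector lines $e_1,e_2,f_1,f_2\in\mathbb{P}^1$ distinct; since $PGL(2,\mathbb{C})$ acts sharply $3$-transitively on $\mathbb{P}^1$, two irreducible pairs with fixed $\lambda,\mu$ are $\sldos$-equivalent iff they share the same cross-ratio $r=[e_1,e_2,f_1,f_2]\in\mathbb{P}^1\setminus\{0,1,\infty\}$. Unitarity forces the two bases to be orthonormal, so after a unitary change of basis I put $e_1=[1:0]$, $e_2=[0:1]$, $f_1=[a:b]$, $f_2=[-\bar b:\bar a]$ with $|a|^2+|b|^2=1$; a direct computation gives $r=|b|^2/(|b|^2-1)\in(-\infty,0)$. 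Conversely, any admissible triple $(\lambda,\mu,r)$ with $r<0$ is realised by inverting $t=|b|^2$ from $r=t/(t-1)$ and picking unimodular $a,b$ of the correct moduli. The remaining ambiguity is the labelling of eigenvectors: $e_1\leftrightarrow e_2$ sends $(\lambda,\mu,r)\mapsto(\lambda^{-1},\mu,r^{-1})$, and $f_1\leftrightarrow f_2$ sends it to $(\lambda,\mu^{-1},r^{-1})$, generating the stated $\mathbb{Z}_2\times\mathbb{Z}_2$-action.

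To finish, I count components. The action is free: each nontrivial element would demand $\lambda=\lambda^{-1}$ or $\mu=\mu^{-1}$, both ruled out by $\lambda,\mu\ne\pm 1$. Hence the quotient is a disjoint union of open intervals, one for each pair $(k,k')$ in the slice $0<k<m$, $0<k'<n$ satisfying $k\equiv k'\pmod 2$. Since $\gcd(m,n)=1$ excludes the case that both $m,n$ are even, a short parity case analysis (both odd; exactly one even) yields $(m-1)(n-1)/2$ valid pairs. I expect the main point requiring care to be the completeness of the cross-ratio as a classifying invariant of the $\sldos$-orbit; the converse realisation step and the combinatorial count are then essentially bookkeeping.
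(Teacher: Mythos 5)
Your proposal is correct and follows essentially the same route as the paper: reduce to $A^m=B^n=\pm\Id$ via Proposition \ref{reducases}, classify the $\sldos$-orbit by the cross-ratio of the two orthonormal eigenbases, compute $r=t/(t-1)\in(-\infty,0)$, realise the converse, quotient by the $\mathbb{Z}_2\times\mathbb{Z}_2$ relabelling, and count pairs $(k,k')$ with $0<k<m$, $0<k'<n$, $k\equiv k'\pmod 2$. The only cosmetic differences are that you justify the completeness of the cross-ratio invariant by sharp $3$-transitivity where the paper cites \cite{vic}, and you phrase the component count as a free-orbit count rather than a direct choice of fundamental domain.
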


To describe the closure of the irreducible orbits, we have to consider the case when $e_1=f_1$, since this is what happens in the limit (the situation is analogous when $e_2=f_2$). In this situation $r=0$, and the representation is equivalent to a reducible representation. Taking into account Lemma \ref{redulemma}, it corresponds to a certain $t\in S^{1}$  such that $\lambda=t^{n}$, $\mu=t^{m}$.
We have another limit case $r=-\infty$, if we allow $e_1=f_2$. The representation is again reducible and corresponds to another $t'\in S^{1}$ such that $\lambda=(t')^{n}$, $\mu^{-1}=(t')^{m}$.

\begin{rem}
The explicit description of the set of $SU(2)$-representations allows us to give an alternative proof of Corollary \ref{inclusionmod}, which stated that the inclusion $i_{\ast}:\mathcal{M}_{SU(2)}\rightarrow \mathcal{M}_{SL(2,\mathbb{C})}$ is injective.

Let us see this. Suppose that $(A,B)$ and $(A',B')$ are two $SU(2)$-representations which are mapped to the same point in $\mathcal{M}_{SL(2,\mathbb{C})}$, i.e, which are $SL(2,\mathbb{C})$-equivalent.
If we denote by ${u_1,u_2,u_3,u_4}$ the set of eigenvectors of $(A,B)$ and by ${v_1,v_2,v_3,v_4}$ the set of eigenvectors of $(A',B')$, we know that:
$$
[u_1,u_2,u_3,u_4]=[v_1,v_2,v_3,v_4]=r\in (-\infty,0)
$$
Since their cross ratio is the same, we know that there exists $P\in SL(2,\mathbb{C})$ that takes the set ${u_i}$ to ${v_i}$. Moreover, since $P$ takes the unitary basis ${u_1,u_2}$ to the unitary basis ${v_1,v_2}$, we get that $P\in SU(2)$, and therefore both representations are $SU(2)$-equivalent.
\end{rem}

\subsection*{Topological description}
We finally describe $Y$ topologically. We refer to \cite{vic} for a geometric description of $X_{\sldos}$.

Using proposition \ref{irrchar}, $Y_{irr}$ is a collection of real intervals (parametrized by $r\in (-\infty,0)$) for a finite number of $(\lambda,\mu)$ that satisfy the required conditions.
By our last observation, the limit cases when $r=0,\infty$ (i.e, points in the closure of $Y_{irr}$)  correspond to the points where $Y_{irr}$ intersects $Y_{red}$.

As we saw before, each interval has two points in its closure: these are $t_0\in S^{1}$ such that $t_0^{n}=\lambda$, $t_0^{m}=\mu$ $(r=0)$ and $t_1\in S^{1}$ corresponding to $t_1^{n}=\lambda$, $t_1^{m}=\mu^{-1}$ $(r=-\infty)$. The conditions on $\lambda,\mu$ force that $t_0\neq t_1$ so that we get different intersection points with $Y_{red}$.

$Y$ is topologically a closed interval ($Y_{\text{red}})$ with $(m-1)(n-1)/2$ open intervals ($Y_{\text{irr}}$) attached at $(m-1)(n-1)$ different points (without any intersections among them). The interval $Y_{red}=[-2,2]$ sits inside $X_{red} \cong \mathbb{C}$ and every real interval in $Y_{irr}$ is inside the corresponding complex line in $X_{irr}$.

The situation is described in the following two pictures:
\begin{center}
 \begin{figure}[h]
 \includegraphics[width=5 cm]{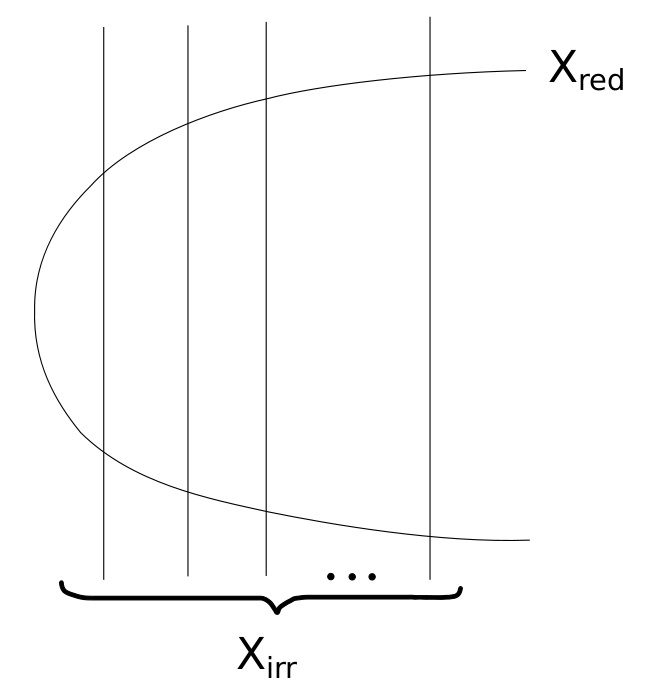}
 \caption{Picture of $X_{\sldos}$, defined over $\mathbb{C}$. The drawn lines are curves isomorphic to $\mathbb{C}$.}
 \end{figure}
\end{center}

\begin{center}
 \begin{figure}[h]
 \includegraphics[width=5 cm]{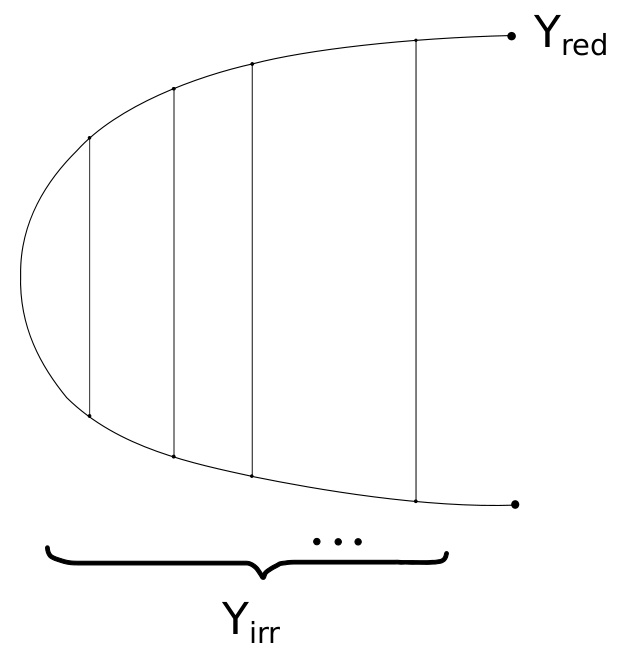}
 \caption{Picture of $Y\subset X_{\sldos}$, defined over $\mathbb{R}$. The picture displays the set of real segments which form $Y_{irr}$.}
 \end{figure}
\end{center}

\section{Noncoprime case}
If $\text{gcd}(m,n)=d>1$, then $G_{m,n}$ does no longer represent a torus knot, since these are only defined in the coprime case. However, the group $G_{m,n}=\langle x,y \mid x^n=y^m \rangle$ still makes sense and we can study the representations of this group into $\sldos$ and $SU(2)$ using the method described above.
We will denote by $a,b$ the integers that satisfy:
\begin{eqnarray*}
m &=&a\, d, \\ n&=&b\,d.
\end{eqnarray*}
As we did before, we focus on $Y=i_{\ast}(X_{SU(2)})$, the set of characters of $SU(2)$-representations.
\subsection*{Reducible representations}
First of all, we describe what happens in the $\sldos$ case:

\begin{prop}
There is an isomorphism:
 $$
 X_{red}\cong \bigsqcup_{i=0}^{\lfloor d/2 \rfloor}X_{red}^{i}
 $$
where:
 \begin{itemize}
 \item[-] $X_{red}^{i}\cong \mathbb{C}^{\ast}$ for $0<i<\frac{d}{2}$.
 \item[-] $X_{red}^{i}\cong \mathbb{C}$ for $i=0$ and $i=\frac{d}{2}$ if $d$ is even
 \end{itemize}
\end{prop}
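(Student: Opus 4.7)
The plan is to mimic the proof of Proposition \ref{redulemma}, now adapted to the case $\gcd(m,n) = d > 1$. Every reducible $\sldos$-representation can be semisimplified to a diagonal pair $A = \operatorname{diag}(\lambda, \lambda^{-1})$, $B = \operatorname{diag}(\mu, \mu^{-1})$, whose character is determined by $(\lambda, \mu) \in (\mathbb{C}^{\ast})^{2}$ subject to $\lambda^{m} = \mu^{n}$, modulo the Weyl involution $(\lambda, \mu) \sim (\lambda^{-1}, \mu^{-1})$ coming from the swap of the two common eigenvectors.

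Rewriting the relation as $(\lambda^{a} \mu^{-b})^{d} = 1$ suggests stratifying by $\omega := \lambda^{a} \mu^{-b}$, which ranges over the $d$-th roots of unity $\omega_k = e^{2\pi i k/d}$, $k \in \mathbb{Z}/d$. Since $\gcd(a,b) = 1$, Bezout yields integers $u, v$ with $ua + vb = 1$, and the subgroup $\{\lambda^{a} = \mu^{b}\} \subset (\mathbb{C}^{\ast})^{2}$ is isomorphic to $\mathbb{C}^{\ast}$ via $t \mapsto (t^{b}, t^{a})$, with inverse $(\lambda,\mu) \mapsto \lambda^{u} \mu^{v}$. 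Fixing any base point $(\lambda_0, \mu_0)$ with $\lambda_0^{a} \mu_0^{-b} = \omega_k$, every element of the $k$-th stratum is uniquely of the form $(\lambda_0 t^{b}, \mu_0 t^{a})$ for some $t \in \mathbb{C}^{\ast}$, and each stratum is therefore isomorphic to $\mathbb{C}^{\ast}$. The Weyl involution sends the $k$-th stratum to the $(-k \bmod d)$-th, so the analysis splits according to whether $-k \equiv k \pmod d$.

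If $k \not\equiv 0, d/2 \pmod{d}$, the strata $k$ and $d-k$ are swapped and together contribute a single $\mathbb{C}^{\ast}$ to the quotient. For $k = 0$, taking $(\lambda_0, \mu_0) = (1,1)$, the involution reads $t \mapsto t^{-1}$ and we recover $X_{red}^{0} \cong \mathbb{C}$ via $s = t + t^{-1}$, exactly as in Proposition \ref{redulemma}. For $k = d/2$ (only possible when $d$ is even), a short computation using $\lambda_0^{a} = -\mu_0^{b}$ shows that in the $t$-parameter the involution takes the form $t \mapsto \zeta t^{-1}$ for a constant $\zeta \in \mathbb{C}^{\ast}$ determined from the Bezout data; the quotient $\mathbb{C}^{\ast}/(t \sim \zeta t^{-1})$ is still isomorphic to $\mathbb{C}$, either directly through $w = t + \zeta t^{-1}$ or by rescaling $t$ to reduce to the previous case. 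Re-indexing the surviving strata by $i = 0, 1, \ldots, \lfloor d/2 \rfloor$ then yields the claimed decomposition. The main obstacle is the $k = d/2$ case: one must verify that the Weyl involution pulls back to the simple shape $t \mapsto \zeta t^{-1}$ (and not to a subtler biregular automorphism of $\mathbb{C}^{\ast}$) and identify the resulting quotient with $\mathbb{C}$; the rest is bookkeeping about how the $d$ strata pair up under inversion.
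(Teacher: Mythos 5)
Your argument is correct and follows essentially the same route as the paper: decompose $X_{red}$ according to the value of $\lambda^{a}\mu^{-b}$ among the $d$-th roots of unity, identify each stratum with $\mathbb{C}^{\ast}$ via a parameter $t$ with $(\lambda,\mu)=(\lambda_0 t^{b},\mu_0 t^{a})$, and track how the Weyl involution $(\lambda,\mu)\sim(\lambda^{-1},\mu^{-1})$ permutes the strata. In fact you are slightly more careful than the paper at the $i=d/2$ step, where the involution really is $t\mapsto\zeta t^{-1}$ rather than $t\mapsto t^{-1}$ on the nose (the paper silently normalizes this away); only a trivial bookkeeping slip remains in your Bezout inverse, which should read $(\lambda,\mu)\mapsto\lambda^{v}\mu^{u}$ given $ua+vb=1$.
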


\begin{proof}
As it is shown in \cite{vic}, an element in $X_{red}$ can be regarded as the character of a split representation, $\rho=\rho'\oplus \rho'^{-1}$. There is a basis such that:
 $$
 A= \begin{pmatrix}
           \lambda & 0 \\ 0 & \lambda^{-1}
          \end{pmatrix}, \quad
 B = \begin{pmatrix}
     \mu & 0 \\ 0 & \mu^{-1}
    \end{pmatrix},
 $$
where $A^{m}=B^{n}$ implies that $\lambda^{m}=\mu^{n}$. We deduce that $(\lambda^{a})^{d}=(\mu^{b})^{d}$, so that $(\lambda,\mu)$ belong to one of the components:
 $$
 X_{red}^{i}=\lbrace (\lambda,\mu) \vert \lambda^{a}=\xi^{i}\mu^{b} \rbrace = \lbrace (\lambda,\mu) \vert
 \lambda^{a}\mu^{-b}=\xi^{i} \rbrace,
 $$
where $\xi$ is a primitive $d$-th root of unity. These components are disjoint, and each one of them is parametrized by $\mathbb{C}^{\ast}$. To see this, let us fix a component, $X_{red}^{i}$, and let $\alpha$ be a $b$-th root of $\xi^{i}$.
Then:
 \begin{eqnarray*}
 X_{red}^{i} & = & \lbrace (\lambda,\mu) \vert \lambda^{a}=\xi^{i}\mu^{b} \rbrace \\
 & = & \lbrace (\lambda,\mu) \vert \lambda^{a}=\alpha^{b}\mu^{b} \rbrace \\
 & = & \lbrace (\lambda,\nu) \vert \lambda^{a}=\nu^{b} \rbrace \cong \mathbb{C}^{\ast}\, .
 \end{eqnarray*}
In other words, for each $(\lambda,\mu) \in X_{red}^{i}$ there is a unique $t\in \mathbb{C}^{\ast}$ such that
$t^{b}=\lambda$, $t^{a}=\alpha\mu$.
However, we have to take account of the action given by permuting the two vectors in the basis, which corresponds to
the change $(\lambda,\mu)\sim (\lambda^{-1},\mu^{-1})$.
In our decomposition, if $(\lambda,\mu)\in X_{red}^{i}$, then $(\lambda^{-1},\mu^{-1}) \in X_{red}^{-i}$.
So $t \in X_{red}^{i}$ is equivalent to $1/t \in X^{-i}_{red}$.

For $0\leq i \leq d-1$, we have two possibilities.
If $i\not\equiv-i \pmod{d}$, then $X^{i}_{red}$ and $X^{-i}_{red}$ get identified.
If $i\equiv -i \pmod{d}$, then $t \sim t^{-1} \in X_{red}^{i}\cong \mathbb{C}$,  and thus
$X_{red}^{i}/_{\sim} \cong \mathbb{C}^{\ast}/_{a\sim a^{-1}} \cong \mathbb{C}$.

When $d$ is even, there are two $i\in \mathbb{Z}/d\mathbb{Z}$ such that $i\equiv -i \pmod{d}$,
so we get two copies of $\mathbb{C}$ in $Y_{red}$. When $d$ is odd we get just one,
since there is only one solution ($i\equiv 0$). The remaining copies of $X_{red}^{i}$ get identified pairwise: $X_{red}^{i}\sim X_{red}^{-i}$.
\end{proof}

Now, for the case of $SU(2)$-representations, we have:
\begin{prop}
There is an isomorphism:
 $$Y_{red}\cong \bigsqcup_{i=0}^{\lfloor \frac{d}{2} \rfloor} Y_{red}^{i} $$
where:
\begin{itemize}
\item[-] $Y_{red}^{i}\cong S^{1}$ for $0<i<\frac{d}{2}$
\item[-] $Y_{red}^{i}\cong [-2,2]$ for $i=0$, $i=\frac{d}{2}$ if $d$ is even
\end{itemize}
\end{prop}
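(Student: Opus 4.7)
The plan is to repeat the argument of the previous proposition for $X_{red}$, but imposing the $SU(2)$ condition $\lambda,\mu\in S^1$ on the eigenvalues, and then to check how this restricts the parameter $t\in\mathbb{C}^*$ and how the $\mathbb{Z}_2$-action of swapping the basis vectors behaves on the result.

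First I would start from the parametrization of the previous proposition: for each $0\le i\le d-1$, the component $X_{red}^i$ consists of pairs $(\lambda,\mu)$ with $\lambda^a\mu^{-b}=\xi^i$, and is bijectively parametrized by $t\in\mathbb{C}^*$ via $t^b=\lambda$, $t^a=\alpha\mu$, where $\alpha$ is a fixed $b$-th root of $\xi^i$. The $SU(2)$ condition forces $|\lambda|=|\mu|=1$. Since $\alpha^b=\xi^i$ has modulus $1$, we get $|\alpha|=1$, and therefore $|t^b|=|\lambda|=1$ and $|t^a|=|\alpha\mu|=1$. Because $\gcd(a,b)=1$, any Bézout relation $ua+vb=1$ gives $|t|=|t|^{ua+vb}=1$, so $t\in S^1$. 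Conversely, any $t\in S^1$ produces $\lambda,\mu\in S^1$ and hence a genuine $SU(2)$-representation. This shows $Y_{red}^i$ is the image of $S^1$ inside $X_{red}^i\cong\mathbb{C}^*$.

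Next I would analyze the identifications coming from the $\mathbb{Z}_2$-action $(\lambda,\mu)\sim(\lambda^{-1},\mu^{-1})$. As noted in the previous proof, this action sends $X_{red}^i$ to $X_{red}^{-i}$, and in terms of the parameter it sends $t$ to $t^{-1}$. So:
\begin{itemize}
\item If $i\not\equiv -i\pmod{d}$, i.e.\ $0<i<d/2$, the two components $Y_{red}^i$ and $Y_{red}^{-i}$ get glued together via $t\leftrightarrow t^{-1}$, producing a single copy of $S^1$.
\item If $i\equiv -i\pmod{d}$ (so $i=0$, or $i=d/2$ when $d$ is even), the involution $t\mapsto t^{-1}$ acts on $S^1$ itself. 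The quotient is described by the invariant $s=t+t^{-1}=2\operatorname{Re}(t)\in[-2,2]$, giving $Y_{red}^i\cong [-2,2]$.
\end{itemize}

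Listing the orbits of this $\mathbb{Z}_2$-action on $\{0,1,\ldots,d-1\}$ gives $\lfloor d/2\rfloor+1$ components when $d$ is even and $(d-1)/2+1=\lfloor d/2\rfloor+1$ components when $d$ is odd, matching the indexing $i=0,\ldots,\lfloor d/2\rfloor$ in the statement. The only delicate point, which I expect to be the main obstacle, is the clean verification that the $SU(2)$-locus inside $X_{red}^i\cong\mathbb{C}^*$ really is the full unit circle $S^1$ (not a smaller subset) and that the gluing $t\leftrightarrow t^{-1}$ identifies $Y_{red}^i$ with $Y_{red}^{-i}$ consistently with the choice of the auxiliary root $\alpha$; this is handled by the coprimality argument for $|t|=1$ above and by noting that a different choice of $\alpha$ only reparametrizes $t$ by a unit scalar, which does not affect $S^1$ as a set.
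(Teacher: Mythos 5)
Your proposal is correct and follows essentially the same route as the paper: restrict the $t$-parametrization of $X_{red}^i\cong\mathbb{C}^*$ to the $SU(2)$ locus, observe this is the unit circle, and then quotient by the swap $t\leftrightarrow t^{-1}$, which either glues $Y_{red}^i$ to $Y_{red}^{-i}$ or folds $S^1$ onto $[-2,2]$ via $s=t+t^{-1}$. The only difference is that you spell out the step $|\lambda|=|\mu|=1\Rightarrow|t|=1$ (which the paper leaves implicit); note that $|t|^b=1$ with $|t|>0$ already forces $|t|=1$, so the B\'ezout argument, while valid, is not needed.
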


\begin{proof}
If $(A,B)$ is a reducible $SU(2)$-representation, both are diagonalizable with respect to a certain basis and therefore:
$$
A= \begin{pmatrix}
           \lambda & 0 \\ 0 & \lambda^{-1}
          \end{pmatrix} \quad
B = \begin{pmatrix}
     \mu & 0 \\ 0 & \mu^{-1}
    \end{pmatrix}
$$
The equality $A^{m}=B^{n}$ gives us that $\lambda^{m}=\mu^{n}$. So the pair $(\lambda,\mu)$ belongs to a certain component $X_{red}^{i}$. Since it is a $SU(2)$-representation, the eigenvalues $\lambda$ and $\mu$ satisfy that $\vert \lambda \vert = \vert \mu \vert = 1$. This implies that $(\lambda,\mu)\in S^{1}\subset \mathbb{C}^{\ast} \cong X_{red}^{i}$: we define $Y_{red}^{i}:=S^{1}\subset X_{red}^{i}$.

We have to take into account the equivalence relation in $X_{red}$ given by the permutation of the eigenvectors. If $i\not\equiv -i \pmod{d}$, then $Y_{red}^{i}\cong Y_{red}^{-i}$. If $i \equiv -i \pmod{d}$, then $Y_{red}^{i}\cong S^{1}/_{a\sim a^{-1}} \cong [-2,2]$. This gives the desired result.
\end{proof}

\subsection*{Irreducible representations}
We start by describing what happens in the $SU(2)$ case.

\begin{prop}
We have an isomorphism
 $$
 Y_{irr}\cong \{ (\lambda,\mu,r) : \lambda^{m}=\mu^{n}=\pm 1; \lambda,\mu\neq \pm 1, r\in
 (-\infty,0)\}/\mathbb{Z}_2\times\mathbb{Z}_2\, .
 $$
This real algebraic variety consists of:
 \begin{itemize}
 \item $\frac{(m-1)(n-1)+1}{2}$ open intervals if $m,n$ are both even,
 \item $\frac{(m-1)(n-1)}{2}$ open intervals in any other case.
 \end{itemize}
\end{prop}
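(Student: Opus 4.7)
The plan is to carry over the strategy of Proposition \ref{irrchar} from the coprime case, since the new ingredient is purely combinatorial. First I would observe that the isomorphism
\[
Y_{irr}\cong\{(\lambda,\mu,r):\lambda^{m}=\mu^{n}=\pm 1;\ \lambda,\mu\neq\pm 1;\ r\in(-\infty,0)\}/\mathbb{Z}_{2}\times\mathbb{Z}_{2}
\]
holds for exactly the same reasons as before. The derivation uses only that matrices in $SU(2)$ are diagonalizable with orthogonal unit eigenvectors, that an irreducible pair $(A,B)$ with $A^{m}=B^{n}$ must satisfy $A^{m}=B^{n}=\pm\Id$ (an extension of Proposition \ref{reducases} whose proof is just diagonalizability), and that the cross-ratio $r=[e_{1},e_{2},f_{1},f_{2}]$ together with $(\lambda,\mu)$ is a complete conjugation invariant. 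Coprimality was only invoked in the reducible analysis to pick up a unique parameter $t$ with $\lambda=t^{n}$, $\mu=t^{m}$, and plays no role here.

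It then remains to count the connected components. I would parametrize the admissible eigenvalues by
\[
\lambda=e^{\pi ik/m},\quad \mu=e^{\pi ik'/n},\quad k\in\{1,\ldots,2m-1\}\setminus\{m\},\ k'\in\{1,\ldots,2n-1\}\setminus\{n\};
\]
since $\lambda^{m}=(-1)^{k}$ and $\mu^{n}=(-1)^{k'}$, the constraint $\lambda^{m}=\mu^{n}$ becomes $k\equiv k'\pmod{2}$. Because $\lambda,\mu\neq\pm 1$, no non-trivial element of $\mathbb{Z}_{2}\times\mathbb{Z}_{2}$ can fix a pair $(\lambda,\mu)$, so the action on triples is free, each orbit has size four, and each orbit contributes exactly one open interval to $Y_{irr}$.

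The last step is a case analysis on the parities of $m$ and $n$. Among the $2m-2$ admissible values of $k$, there are $m-1$ of each parity when $m$ is odd; when $m$ is even, removing $k=m$ from the even values leaves $m-2$ even and $m$ odd. The same accounting applies to $k'$. Multiplying and summing the same-parity contributions gives $2(m-1)(n-1)$ admissible pairs whenever at least one of $m,n$ is odd, and
\[
(m-2)(n-2)+mn=2\bigl((m-1)(n-1)+1\bigr)
\]
when both are even; dividing by the orbit size $4$ yields the two cases in the statement. The main subtlety, and the only place where the noncoprime case genuinely departs from Proposition \ref{irrchar}, is the both-even case: because neither of the excluded values $k=m,\, k'=n$ is odd, the count of odd-odd pairs is not reduced, creating an asymmetry that produces the extra $+1$. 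Everywhere else the argument reduces to the parity bookkeeping already present in the coprime case.
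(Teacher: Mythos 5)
Your proposal is correct and follows essentially the same route as the paper: reduce to Proposition \ref{reducases} and the coprime-case cross-ratio analysis (noting coprimality is not used there), then count admissible eigenvalue pairs by a parity case analysis on $m,n$. The only cosmetic difference is that you count over the full range of $(k,k')$ and divide by the free $\mathbb{Z}_2\times\mathbb{Z}_2$ action, whereas the paper counts representatives with $0<k<m$, $0<k'<n$ directly; the arithmetic agrees in all three cases.
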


\begin{proof}
By Proposition \ref{reducases}, a representation $(A,B)$ is reducible unless $A^{m}=B^{n}=\pm \Id$,
$A,B \neq \pm \Id$. So the set of irreducible representations can be described using the same tools
as before: the set of equivalence classes of irreducible representations is a collection of intervals
$r\in (-\infty, 0)$ parametrized by pairs $(k,k')$ satisfying:
 \begin{equation} \label{conditionkas}
 0 <k<m, \quad 0<k'<n, \quad k\equiv k' \pmod{2}.
 \end{equation}
We compute the number of such pairs, separating in three different cases according to the parity of $m$ and $n$:

Suppose $m,n$ are both even. If $k\equiv k' \equiv 0 \pmod{2}$, then $k\in \lbrace 2, 4,\ldots, m-2 \rbrace$,
$k' \in \lbrace 2, 4, \ldots n-2 \rbrace$, so there are $\frac{(m-2)(n-2)}{4}$ such pairs. If
$k\equiv k' \equiv 1 \pmod{2}$, $k \in \lbrace 1,3, \ldots m-1 \rbrace$, $k' \in \lbrace 1,3, \ldots, n-1\rbrace$,
we have $\frac{mn}{4}$ pairs. The sum is $\frac{(m-2)(n-2)}{4}+ \frac{mn}{4}=\frac{(m-1)(n-1)+1}{4}$.

Suppose $m$ is even and $n$ is odd (the case $m$ odd and $n$ even is similar). Then if
$k\equiv k' \equiv 0 \pmod{2}$, $k\in \lbrace 2, 4, \ldots, m-2 \rbrace$, $k' \in \lbrace 2, 4, \ldots n-1 \rbrace$,
we get $\frac{(m-2)(n-1)}{4}$ such pairs. If $k\equiv k' \equiv 1 \pmod{2}$, $k \in \lbrace 1,3, \ldots m-1 \rbrace$, $k' \in \lbrace 1,3, \ldots, n-2 \rbrace$, and there are $\frac{m(n-1)}{4}$ such pairs.
We get in total $\frac{m(n-1)}{4}+\frac{(m-2)(n-1)}{4}= \frac{(m-1)(n-1)}{2}$.

Finally, suppose both $m,n$ odd. If $k\equiv k' \equiv 0 \pmod{2}$, $k\in \lbrace 2, 4, \ldots, m-1 \rbrace$,
$k' \in \lbrace 2, 4, \ldots n-1 \rbrace$, and we get $\frac{(m-1)(n-1)}{4}$ such pairs. If $k\equiv k'
\equiv 1 \pmod{2}$, $k \in \lbrace 1,3, \ldots m-2 \rbrace$, $k' \in \lbrace 1,3, \ldots, n-2 \rbrace$,
there are $\frac{(m-1)(n-1)}{4}$ such pairs. We get $\frac{(m-1)(n-1)}{2}$ pairs in total.

We have obtained a decomposition:
$$
Y_{irr}=\bigsqcup_{k,k'}Y_{irr}^{(k,k')}
$$
where every $Y_{irr}^{(k,k')}$ is an open interval isomorphic to $(-\infty,0)$.
\end{proof}

For the case of $SL(2,\mathbb{C})$ representations, we have the following:

\begin{prop}
The component $X_{irr}\subset X_{SL(2,\mathbb{C})}$ is described as
 $$
 X_{irr} = \bigsqcup_{k,k'} X_{irr}^{(k,k')}
 $$
 where $k,k'$ satisfy \eqref{conditionkas}, and $X_{irr}^{(k,k')} =\mathbb{P}^1-\{0,1,\infty\}$.
This complex algebraic variety consists of $\frac{(m-1)(n-1)+1}{2}$ components if $m,n$ are both even,
of $\frac{(m-1)(n-1)}{2}$ components if one of $m,n$ is odd.
Moreover $Y_{irr}^{(k,k')}=(-\infty,0)\subset X_{irr}^{(k,k')}$ in the natural way.
\end{prop}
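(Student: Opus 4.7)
The plan is to describe $X_{irr}$ as a disjoint union of components indexed by the same pairs $(k,k')$ as in the previous $SU(2)$ proposition, with each component now isomorphic to $\mathbb{P}^1-\{0,1,\infty\}$ instead of the interval $(-\infty,0)$. The analysis parallels that of Section 3 for the coprime case and of the previous proposition for the noncoprime $SU(2)$ case; the enumeration of valid pairs $(k,k')$ has already been done and so the component count is inherited directly.

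First I would adapt Proposition \ref{reducases} to $SL(2,\mathbb{C})$: if $A^m=B^n\neq\pm\Id$, or if $A=\pm\Id$ or $B=\pm\Id$, then $(A,B)$ is reducible. The proof is essentially the same, using that the centralizer in $SL(2,\mathbb{C})$ of a non-scalar element is abelian (a maximal torus when the element is diagonalizable, or the set of matrices of the form $aI+bC$ when it is non-semisimple); since both $A$ and $B$ lie in the centralizer of $C=A^m=B^n$, they share an eigenvector. Hence for an irreducible pair $(A,B)$ we must have $A^m=B^n=\pm\Id$ with $A,B\neq\pm\Id$, and the eigenvalue data is parametrized by pairs $(k,k')$ satisfying \eqref{conditionkas}, exactly as in the previous proposition.

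For fixed $(k,k')$ (equivalently, fixed eigenvalues $\lambda,\mu$), the $SL(2,\mathbb{C})$-equivalence class of $(A,B)$ is determined by the projective invariant of the four eigenvectors $\{e_1,e_2,f_1,f_2\}$, which are distinct by irreducibility: namely their cross ratio
$$
r=[e_1,e_2,f_1,f_2]\in\mathbb{P}^1-\{0,1,\infty\},
$$
the excluded values $\{0,1,\infty\}$ corresponding precisely to the coincidence of two eigenvectors. Conversely, every such $r$ is realized: after $SL(2,\mathbb{C})$-conjugation one places $e_1=[1:0]$ and $e_2=[0:1]$, and then chooses $f_1,f_2$ to be any pair of distinct points of $\mathbb{P}^1$ with the prescribed cross ratio. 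In contrast to the $SU(2)$ analysis of Section 3, no reality constraint is imposed on the entries $a,b$, so the full $\mathbb{P}^1-\{0,1,\infty\}$ is obtained rather than just the arc $(-\infty,0)$. This yields $X_{irr}^{(k,k')}\cong\mathbb{P}^1-\{0,1,\infty\}$, and different $(k,k')$ give disjoint components because they are already distinguished by the character values $\lambda+\lambda^{-1}$ and $\mu+\mu^{-1}$.

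The component counts are inherited from the previous proposition. The final assertion $Y_{irr}^{(k,k')}=(-\infty,0)\subset\mathbb{P}^1-\{0,1,\infty\}=X_{irr}^{(k,k')}$ follows from the computation $r=t/(t-1)\in(-\infty,0)$ obtained in Section 3 for $SU(2)$-representations: the same cross ratio $r$ parametrizes both $X_{irr}^{(k,k')}$ and its $SU(2)$-subset $Y_{irr}^{(k,k')}$, and for $SU(2)$-representations it restricts to the real negative locus. The only point requiring mild care is verifying that the $\mathbb{Z}_2\times\mathbb{Z}_2$ action is fully absorbed by the restriction $0<k<m,\ 0<k'<n$, so that no further identifications collapse distinct $(k,k')$ components; but this bookkeeping is identical to that already carried out in the coprime case.
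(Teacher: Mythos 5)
Your proposal is correct and follows essentially the same route the paper intends: the paper states this proposition without a separate proof, relying on the description from \cite{vic} and the counting of pairs $(k,k')$ already done for $Y_{irr}$, and your argument (reducibility unless $A^m=B^n=\pm\Id$, cross-ratio parametrization giving $\mathbb{P}^1-\{0,1,\infty\}$ per pair, and the real arc $(-\infty,0)$ for the $SU(2)$ locus) is precisely the justification being invoked. The extra care you take with the non-semisimple centralizer case and with checking that the $\mathbb{Z}_2\times\mathbb{Z}_2$ action is absorbed by the normalization $0<k<m$, $0<k'<n$ is correct and fills in details the paper leaves implicit.
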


The limit cases $r=0$, $r=-\infty$ correspond to the closure of the irreducible components, and these points
are exactly where $\overline{Y}_{irr}$ intersects $Y_{red}$. The triples $(\lambda,\mu,0)$, $(\lambda,\mu,-\infty)$
correspond to the reducible representations with eigenvalues $(\lambda,\mu)$ and $(\lambda,\mu^{-1})$. Since $\lambda,\mu\neq \pm 1$, we get two different intersection points. Note that the pattern of intersections for
$\overline{X}_{irr}$ and $X_{red}$ is the same, but the components are complex algebraic varieties now.

To understand the way the closure of the components of $Y_{irr}$ intersect $Y_{red}$, we have the following:

\begin{prop} \label{prop:11}
The closure of $Y_{irr}^{(k,k')}$ is a closed interval that joins $Y_{red}^{i_{0}}$ with $Y_{red}^{i_{i}}$, where:
 $$
 i_{0}=\frac{k-k'}{2}, \quad i_{i}=\frac{k+k'}{2} \pmod{d}\, .
 $$
\end{prop}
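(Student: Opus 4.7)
My plan is a direct computation using the parametrizations already established in the paper. By the analysis of the irreducible case (which carries over verbatim to the noncoprime setting), an element of $Y_{irr}^{(k,k')}$ is represented by a pair $(A,B)$ with eigenvalues
\[
\lambda=e^{\pi i k/m}, \qquad \mu=e^{\pi i k'/n},
\]
and with cross ratio $r\in(-\infty,0)$ of the four eigenvectors. The observation made after Proposition \ref{irrchar} is still valid: as $r\to 0$ the eigenvectors $e_1,f_1$ coalesce and the representation degenerates to the reducible one whose diagonal entries are $(\lambda,\mu)$, while as $r\to -\infty$ the vectors $e_1,f_2$ coalesce and one obtains the reducible representation with diagonal entries $(\lambda,\mu^{-1})$.

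Next I would compare these two limits with the decomposition $X_{red}=\bigsqcup X_{red}^i$. Recall that $(\lambda,\mu)\in X_{red}^i$ precisely when $\lambda^a\mu^{-b}=\xi^i$, where $\xi=e^{2\pi i/d}$ is the fixed primitive $d$-th root of unity. Using $m=ad$ and $n=bd$, compute
\[
\lambda^a=e^{\pi i k a/m}=e^{\pi i k/d},\qquad \mu^b=e^{\pi i k'/d},
\]
so that
\[
\lambda^a\mu^{-b}=e^{\pi i(k-k')/d}=\xi^{(k-k')/2},
\]
where the last equality uses the parity condition $k\equiv k'\pmod 2$ to write the exponent as an integer divided by $d$. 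This identifies the $r=0$ limit as a point of $Y_{red}^{i_0}$ with $i_0=(k-k')/2$. The analogous computation for the $r=-\infty$ limit replaces $\mu$ by $\mu^{-1}$ and gives
\[
\lambda^a(\mu^{-1})^{-b}=\lambda^a\mu^b=e^{\pi i(k+k')/d}=\xi^{(k+k')/2},
\]
so the other endpoint lies in $Y_{red}^{i_1}$ with $i_1=(k+k')/2\pmod d$.

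Finally I would reconcile these indices with the identification $Y_{red}^i=Y_{red}^{-i}$ that was used to reduce the range of $i$ to $0\le i\le \lfloor d/2\rfloor$. Under this identification the indices $i_0,i_1$ are well-defined as elements of $\{0,1,\dots,\lfloor d/2\rfloor\}$, and the proposition follows. The one subtlety I expect is bookkeeping around this $\mathbb{Z}/d\mathbb{Z}$ / $\mathbb{Z}_2$ symmetry, in particular making sure that the endpoints really do land on (possibly two distinct) components of $Y_{red}$ rather than collapsing into a single one in degenerate cases; this is precisely where the conditions $\lambda,\mu\ne\pm 1$ and $k\not\equiv k'\pmod 2$ being forbidden come into play, ensuring $i_0\ne i_1$ in the relevant sense and that the closure of each irreducible interval is indeed a closed interval with two well-defined endpoints.
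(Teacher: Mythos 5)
Your proof is correct and follows essentially the same route as the paper: identify the $r=0$ and $r=-\infty$ limits as the reducible representations with eigenvalues $(\lambda,\mu)$ and $(\lambda,\mu^{-1})$, then compute $\lambda^{a}\mu^{\mp b}=\xi^{(k\mp k')/2}$ to locate them in $X_{red}^{i_{0}}$ and $X_{red}^{i_{1}}$ (the paper writes the same computation via a primitive $2dab$-th root of unity $\omega$, with $\xi=\omega^{2ab}$). One small caveat: your closing worry about ensuring $i_{0}\neq i_{1}$ is not needed and is not in general true --- the two endpoints are distinct \emph{points} because $\mu\neq\mu^{-1}$, but they may lie on the same component $Y_{red}^{i}$, as the paper's subsequent Remark makes explicit.
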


\begin{proof}
Set $D=2d\ ab$, and consider $\omega$ a primitive $D$-th root of unity. Then
$\xi:=\omega^{D/d}=\omega^{2ab}$ is a primitive $d$-th root of unity. The irreducible component
$Y_{irr}^{(k,k')}$ is the interval $(\lambda,\mu,r)$, $r\in (-\infty,0)$, where
 $$
 \lambda = (\omega^{b})^{k}, \quad \mu=(\omega^{a})^{k'},
 $$
and $k,k'$ are subject to the conditions (\ref{conditionkas}), see equation (\ref{eqn:labeleada}).
The points in the closure of $Y_{irr}^{(k,k')}$ correspond to the reducible representations with eigenvalues $(\lambda,\mu)$ and $(\lambda,\mu^{-1})$. Clearly $(\lambda,\mu) \in X_{red}^{i_0}$, since
 $$
 \lambda^{a}\mu^{-b}=\omega^{kab}\omega^{-k'ab}=\omega^{\frac{k-k'}{2}2ab}=\omega^{i_{0}2ab}=\xi^{i_{0}}\, ,
 $$
and $(\lambda,\mu^{-1}) \in X_{red}^{i_{1}}$, since
 $$
 \lambda^{a}\mu^{b}=\omega^{kab}\omega^{k'ab}=\xi^{i_{1}}\, .
 $$
\end{proof}

Proposition \ref{prop:11} gives a clear rule to depict $Y=Y_{irr}\cup Y_{red}$ for every pair $(m,n)$.
Acutally, $Y$ is a collection of intervals attached on their endpoints to $Y_{red}$, which consists of
several disjoint copies of $S^{1}$ and $[-2,2]$. Note that the pattern of intersections for the irreducible
components of $X_{SL(2,\mathbb{C})} =X_{irr}\cup X_{red}$ is the same as that of $Y$.

When $m,n$ are coprime, we recover our previous pictures.

\begin{cor} \label{connectingcomp}
For any two different components $Y_{red}^{i_{0}}, Y_{red}^{i_{1}} \subset Y_{red}$, there is a pair $(k,k')$ such that $\overline{Y}_{irr}^{(k,k')}$ joins them.

In particular, $Y$ is a connected topological space.
\end{cor}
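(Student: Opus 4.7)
The plan is to combine Proposition \ref{prop:11} with a straightforward linear construction. Given two distinct indices $i_0, i_1 \in \{0,1,\ldots,\lfloor d/2 \rfloor\}$, the goal is to exhibit a single pair $(k,k')$ satisfying (\ref{conditionkas}) whose associated irreducible interval $\overline{Y}_{irr}^{(k,k')}$ has its two endpoints on $Y_{red}^{i_0}$ and $Y_{red}^{i_1}$. Since Proposition \ref{prop:11} places the endpoints in $Y_{red}^{(k-k')/2}$ and $Y_{red}^{(k+k')/2}$ (read modulo $d$ and then under the identification $i\sim -i$), it is natural to solve the linear system $(k-k')/2 = i_0$, $(k+k')/2 = i_1$.

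Assume without loss of generality that $i_0 < i_1$, and set
$$
k := i_0 + i_1, \qquad k' := i_1 - i_0.
$$
I then verify the three conditions of (\ref{conditionkas}). First, $k - k' = 2 i_0$ is even, so $k \equiv k' \pmod 2$. Second, $i_1 > i_0 \ge 0$ forces $k' > 0$ and hence $k > 0$. For the upper bounds, the assumption $i_0 \ne i_1$ excludes the extremal case $i_0 = i_1 = d/2$, so $k = i_0 + i_1 < 2\lfloor d/2 \rfloor \le d \le m$; likewise $k' = i_1 - i_0 \le \lfloor d/2 \rfloor < d \le n$, using $m = ad$, $n = bd$ with $a,b \ge 1$. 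Since the values $(k-k')/2 = i_0$ and $(k+k')/2 = i_1$ already lie in $\{0,1,\ldots,\lfloor d/2 \rfloor\}$, no further reduction by $i\sim -i$ is needed, and Proposition \ref{prop:11} yields that $\overline{Y}_{irr}^{(k,k')}$ joins $Y_{red}^{i_0}$ with $Y_{red}^{i_1}$, proving the first assertion.

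For the connectedness statement, each $Y_{red}^i$ is connected (being either $S^1$ or $[-2,2]$), and each $\overline{Y}_{irr}^{(k,k')}$ is a closed interval, hence connected, with both endpoints lying in $Y_{red}$ by Proposition \ref{prop:11}. The first part of the corollary then guarantees that any two components of $Y_{red}$ are bridged by at least one such interval, so the union
$$
Y = Y_{red} \cup \bigcup_{(k,k')} \overline{Y}_{irr}^{(k,k')}
$$
is path-connected, and in particular connected.

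The only step requiring any attention is the strict inequality $k < m$, which rests on ruling out $i_0 = i_1 = d/2$; this is precisely where the hypothesis that $i_0$ and $i_1$ are \emph{different} is used. Everything else is routine bookkeeping in the presentation of $Y_{red}$ and $Y_{irr}$ already set up in the preceding propositions.
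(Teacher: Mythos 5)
Your proof is correct and follows essentially the same route as the paper: invoke Proposition \ref{prop:11} and exhibit an explicit pair $(k,k')$ solving the resulting linear system, then check the constraints \eqref{conditionkas}. The only difference is the representative chosen --- you take $(k,k')=(i_0+i_1,\,i_1-i_0)$ so that $(k+k')/2=i_1$ exactly, while the paper takes $(k,k')=(d+i_0-i_1,\,d-i_0-i_1)$ so that $(k+k')/2\equiv -i_1\pmod d$; both satisfy the required bounds and parity, so this is an immaterial variation.
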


\begin{proof}
We can assume $0\leq i_{0} < i_{i} \leq \frac{d}{2}$. Then $0<k=d+i_{0}-i_{1}<d\leq m$ and $0<k'=d-i_{0}-i_{1}<d
\leq n$ both satisfy that $k\equiv k' \pmod{2}$ and $\frac{k-k'}{2}=i_{0}$, $\frac{k+k'}{2}=i_{1}$.
\end{proof}

\begin{rem}
It can be checked that there is no component $\overline{Y}_{irr}^{(k,k')}$ which joins $Y_{red}^{i_{0}}$ to
itself when $m=n$, or when one of $m,n$ divides the other, and we are dealing with $i_0=0$ or $i_0=d/2$ (the latter
only if $d$ is even).

Actually, such component would correspond to a pair $(k,k')$ such that $\frac{k-k'}{2}\equiv \pm i_0 \pmod{d}$
and $\frac{k+k'}{2}\equiv \pm i_0 \pmod{d}$. Accounting for all possibilities of signs, we have
either $k\equiv \pm 2i_0$, $k'\equiv 0 \pmod{d}$, or $k\equiv 0$, $k'\equiv \pm 2i_0 \pmod{d}$.
This has solutions unless $m>n=d$, $i_0=0,d/2$; $n>m=d$, $i_0=0,d/2$; or $m=n=d$, any $i_0$.
 \end{rem}

\end{document}